\documentclass{article}[10pt]
\usepackage{theorem}
\usepackage{eurosym}
\usepackage{amssymb}
\usepackage{float}
\usepackage{amsmath}
\usepackage{amsfonts}
\usepackage[american]{babel}
\usepackage{verbatim}
\usepackage{geometry}

\newtheorem{theorem}{Theorem}

{\theorembodyfont{\rmfamily}

}

\newtheorem{lemma}[theorem]{Lemma}

{\theorembodyfont{\rmfamily}
\newtheorem{remark}[theorem]{Remark}
}

\newenvironment{proof}[1][Proof]{\noindent\textbf{#1} }{\ \rule{0.5em}{0.5em}}

\newcommand*\re{\mathbb{R}}

\newcommand*\Omegabar{\overline{\Omega}}
\newcommand*\delomega{\partial\Omega}

\newcommand*\WBDisk{W^{1,2}_0(B_1)\cap \mathcal{B}_1(B_1)}
\newcommand*\WBDiskrad{W^{1,2}_{0,rad}(B_1)\cap \mathcal{B}_1(B_1)}

\begin{document}

\title{The Singular Moser-Trudinger Inequality on Simply Connected Domains}
\maketitle
\date{}

\centerline{\scshape Gyula Csat\'{o}$^1$ and Prosenjit Roy$^2$ }
\medskip
{\footnotesize
\centerline{1 Departamento de Matem\'atica, Universidad de Concepci\'on, Concepcion, Chile.}
  \centerline{2 Tata Institute of Fundamental Research, Centre For Applicable Mathematics, Bangalore, India.}
   \centerline{gy.csato.ch@gmail.com, prosenjit@math.tifrbng.res.in}

}

\smallskip

\begin{abstract} In this paper the authors complete their study of the singular Moser-Trudinger embedding [G. Csat\'o  and P. Roy, Extremal functions for the singular Moser-Trudinger inequality in 2 dimensions, Calc. Var. Partial Differential Equations, DOI 10.1007/s00526-015-0867-5], abbreviated [CR]. There they have proven the existence of an extremal function for the singular Moser-Trudinger embedding
$$
  \sup_{\substack {v\in W^{1,2}_0(\Omega) \\ \|\nabla v\|_{L^2}\leq 1}}
  \int_{\Omega}\frac{e^{\alpha v^2}-1}{|x|^{\beta}}\leq C,
$$
where $\alpha>0$ and $\beta\in [0,2)$ are such that $\frac{\alpha}{4\pi}+\frac{\beta}{2}\leq1,$ and $\Omega\subset\re^2.$ This generalizes a well known result by Flucher, who has proven the case $\beta=0.$ The proof in [CR] is however far too technical and complicated for simply connected domains. Here we give a much simpler and more self-contained proof using complex analysis, which also generalizes the corresponding proof given by Flucher for such domains. This should make [CR] more easily accessible.
\end{abstract}

\let\thefootnote\relax\footnotetext{\textit{2010 Mathematics Subject Classification.} Primary 35B38.}
\let\thefootnote\relax\footnotetext{\textit{Key words and phrases.} Moser Trudinger embedding, extremal function.}

\section{Introduction}\label{section:introduction}

The Moser-Trudinger embedding has been generalized by Adimurthi-Sandeep \cite{Adi-Sandeep} to a singular version, which reads as the following:
If $\alpha>0$ and $\beta\in[0,2)$ is such that
\begin{equation}
 \label{intro:eq:alpha and beta sum}
  \frac{\alpha}{4\pi}+\frac{\beta}{2}\leq 1,
\end{equation}
then the following supremum is finite
\begin{equation}\label{intro:eq:Adi Sandeep result}
  \sup_{\substack {v\in W^{1,2}_0(\Omega) \\ \|\nabla v\|_{L^2}\leq 1}}
  \int_{\Omega}\frac{e^{\alpha v^2}-1}{|x|^{\beta}}<\infty\,,
\end{equation}
where $\Omega\subset\re^2$ is a bounded open smooth set.
In Csat\'o-Roy \cite{Csato-Roy} the authors have proven that the supremum is attained. This generalizes the result of Flucher \cite{Flucher}, who has proven the case $\beta=0.$ On the history of Flucher's result and other recent developments on the subject we refer to \cite{Adi-Tintarev}, \cite{Carleson-Chang},   \cite{Flucher},   \cite{Malchiodi-Martinazzi} , \cite{manchini} and \cite{Struwe 1}. Flucher gives two different proofs, one for simply connected domains and one for general domains. The same can be done for the case $\beta>0,$ however even more technical and substantial difficulties arise, leading to lengthy proofs. Therefore we have decided to split these two cases into separate papers. In the present paper we shall give a significantly simpler proof of the following theorem.

\begin{theorem}\label{theorem:intro:Extremal for Singular Moser-Trudinger}
Let $\Omega\subset\re^2$ be a bounded open simply connected set with $0\in\Omega$ and smooth boundary $\delomega.$ Let $\alpha>0$ and $\beta\in[0,2)$ be such that \eqref{intro:eq:alpha and beta sum} is satisfied.
Then there exists $u\in W_0^{1,2}\left(\Omega\right)$ such that $\|\nabla u\|_{L^2(\Omega)}\leq 1$ and
$$
  \sup_{\substack {v\in W^{1,2}_0(\Omega) \\ \|\nabla v\|_{L^2}\leq 1}}
  \int_{\Omega}\frac{e^{\alpha v^2}-1}{|x|^{\beta}}=\int_{\Omega}\frac{e^{\alpha u^2}-1}{|x|^{\beta}}.
$$
\end{theorem}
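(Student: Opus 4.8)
The plan is to follow the classical blow-up / concentration-compactness strategy of Carleson-Chang and Flucher, adapted to the singular weight $|x|^{-\beta}$, but to exploit simple connectedness via the Riemann mapping theorem to reduce matters to the disk. First I would fix a maximizing sequence $u_k\in W^{1,2}_0(\Omega)$ with $\|\nabla u_k\|_{L^2}\leq 1$ for the functional $J_{\alpha,\beta}(v)=\intomega (e^{\alpha v^2}-1)|x|^{-\beta}\,dx$. Passing to a subsequence, $u_k\rightharpoonup u$ weakly in $W^{1,2}_0$ and the measures $|\nabla u_k|^2\,dx$ converge weakly-$*$ to some probability measure $\mu$. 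The dichotomy is: either the sequence does not concentrate (i.e.\ $\mu$ has no atom of full mass at a point, equivalently the critical level is not lost), in which case a standard argument using the subcritical compactness of the embedding \eqref{intro:eq:Adi Sandeep result} below the threshold \eqref{intro:eq:alpha and beta sum} shows $u_k\to u$ strongly and $u$ is the desired extremal; or $\mu=\delta_{x_0}$ for some $x_0\in\Omegabar$, the concentration case, which is the heart of the matter.

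In the concentration case the key is to obtain a sharp upper bound for the limiting value of $J_{\alpha,\beta}$ along concentrating sequences and to show it is strictly smaller than what one can achieve by an explicit test function, thereby ruling out concentration. Here simple connectedness enters decisively: let $\Phi:B_1\to\Omega$ be a Riemann map. If concentration happens at $x_0=0$ (the weight's singularity), one changes variables and is reduced to a radial problem on the disk with the weight $|x|^{-\beta}$; if concentration happens at some $x_0\neq 0$, the weight is smooth and bounded near $x_0$ and one recovers essentially Flucher/Carleson-Chang's non-singular estimate, so the relevant constant is governed by $e^{1+4\pi H(x_0)}$ type quantities where $H$ is a Robin-type function, conformally transported from the disk. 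The concentration-level constant should come out as (a suitable expression involving) $\frac{\pi}{1-\beta/2}\,e^{1+\cdots}$ with the Robin mass of $\Omega$ at $x_0$, computed through $\Phi$; I would prove the upper bound by a capacity / level-set decomposition of $u_k$ into a concentrated core on a small ball where one uses the one-dimensional (radial) sharp constant, plus an outer part controlled by the weak limit of the Dirichlet energy and the conformal geometry.

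The decisive step — and the main obstacle — is the \emph{strict} inequality: one must construct an explicit family of test functions (Moser-type concentrated functions, possibly the conformal pull-back through $\Phi$ of truncated fundamental-solution-like profiles, modified near $0$ to respect the singular weight) whose energy is $\leq 1$ and for which $J_{\alpha,\beta}$ strictly exceeds the concentration-level bound at every possible blow-up point $x_0\in\Omegabar$. Getting the test-function computation sharp enough (the $O(1)$ term in the expansion of $J_{\alpha,\beta}$ at the test functions must beat the $O(1)$ term in the concentration bound) is exactly where the earlier careful analysis was needed; here the Riemann map should make the geometric constant (Robin mass) transparent and the radial reduction should make the singular-weight contribution an explicit one-dimensional integral, so that the comparison becomes a clean inequality between two explicit constants. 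Once concentration is excluded, the maximizing sequence is precompact, $u_k\to u$ strongly in $W^{1,2}_0$, hence $\|\nabla u\|_{L^2}\leq 1$ and $J_{\alpha,\beta}(u)$ equals the supremum, which proves Theorem~\ref{theorem:intro:Extremal for Singular Moser-Trudinger}. I would also remark that, as in [CR], $u$ can be taken nonnegative and that after the conformal change of variables the problem on $\Omega$ is equivalent to an extremal problem on $B_1$ for the functional with weight $|x|^{-\beta}|\Phi'(x)|^{2-\beta}$, which is where the radial symmetrization and the Carleson-Chang estimate on the disk are applied directly.
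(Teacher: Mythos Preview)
Your overall architecture---take a maximizing sequence, apply the concentration--compactness alternative, rule out concentration by a strict inequality, and use the Riemann map $\Phi:B_1\to\Omega$ to compare with the disk---is exactly the paper's strategy. But you are making the ``decisive step'' much harder than it needs to be, and in doing so you drift back toward the general-domain argument of \cite{Csato-Roy} that this paper is designed to avoid.

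The paper does \emph{not} construct test functions on $\Omega$, does not compute Robin masses or $O(1)$ expansions, and does not redo any Carleson--Chang analysis. Instead it proves two clean comparison statements, both with the \emph{same} multiplicative constant $|h'(0)|^{2-\beta}$: (i) pushing radial $v$ forward by $u=v\circ h^{-1}$ gives $F_\Omega^{\sup}\ge |h'(0)|^{2-\beta}F_{B_1}^{\sup}$; (ii) pulling a sequence concentrating at $0$ back by $v_i=u_i\circ h$ gives $F_\Omega^{\delta}(0)=|h'(0)|^{2-\beta}F_{B_1}^{\delta}(0)$. Once you have (i) and (ii), the strict inequality $F_{B_1}^{\delta}(0)<F_{B_1}^{\sup}$ on the ball is used as a black box, and the strict inequality on $\Omega$ is immediate---no test-function construction on $\Omega$ at all. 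The technical heart is not a capacity/level-set decomposition but a one-line complex-analysis inequality: writing $h(z)=zg(z)$ with $g,h'$ nonvanishing, the function $\exp(\gamma\psi-\beta\varphi)$ (with $h'=e^\psi$, $g=e^\varphi$) is holomorphic, and the Cauchy mean-value formula gives
\[
2\pi|h'(0)|^{2-\beta}\le r^\beta\int_0^{2\pi}\frac{|h'(re^{it})|^2}{|h(re^{it})|^\beta}\,dt,
\]
which is exactly what is needed for (i). For (ii) one only needs continuity of $|h'|^2|y|^\beta/|h(y)|^\beta$ at $0$ and that the tail $\int_{B_1\setminus B_\delta}(e^{\alpha v_i^2}-1)\to 0$ for concentrating $v_i$.

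Two smaller points. First, for $\beta>0$ and concentration at $x_0\neq 0$, you do not need any Flucher-type estimate: since the weight $|x|^{-\beta}$ is bounded near $x_0$ and the nonlinearity vanishes away from the concentration point, one gets $F_\Omega^\delta(x_0)=0$ directly. Second, after the conformal change the weight on $B_1$ is $|h'(y)|^2/|h(y)|^\beta$, not $|y|^{-\beta}|\Phi'(y)|^{2-\beta}$; these agree only asymptotically as $y\to 0$, and the paper's argument handles the discrepancy via the continuity of their ratio rather than by symmetrization against the approximate weight.
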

Let us explain the crucial simplifications of the proof for simply connected domains. $F_{\Omega}$ shall denote the singular Moser-Trudinger functional, $F_{\Omega}^{sup}$ its supremum given by \eqref{intro:eq:Adi Sandeep result} and $F_{\Omega}^{\delta}(0)$ the maximizing concentration level at $0,$ cf. Section \ref{section:notation}. 
The proof is based on the result of Carleson-Chang \cite{Carleson-Chang}, which easily extends to the singular Moser-Trudinger functional and states that on the unit ball $B_1$ we have that
$$
  F_{B_1}^{\delta}(0)<F_{B_1}^{sup}.
$$
This implies by a concentration compactness alternative that on the ball the supremum is attained.
Therefore, as in Flucher \cite{Flucher}, the main difficulty in \cite{Csato-Roy} consists in relating $F^{sup}_{\Omega}$ to $F^{sup}_{B_1}$, respectively $F^{\delta}_{\Omega}(0)$ to $F^{\delta}_{B_1}(0).$ This consists of two parts. 
\smallskip

\textit{Part 1.} One establishes the inequality 
\begin{equation}\label{intro:ball to domain}
  F_{\Omega}^{\sup}\geq I_{\Omega}(0)^{2-\beta}F_{B_1}^{\sup},
\end{equation} 
where $I_{\Omega}(0)$ is the conformal incenter of $\Omega$ at $0.$
The conformal incenter $I_{\Omega}(x)$ of a domain is defined by the Green's function for the Laplace operator $G_{\Omega,x}$ of $\Omega$ with singularity at $x,$ and its regular part $H_{\Omega,x},$ namely
$$
  G_{\Omega,x}(y)=-\frac{1}{2\pi}\log(|x-y|)-H_{\Omega,x}(y),\quad\text{ and }\quad I_{\Omega}(x)=e^{-2\pi H_{\Omega,x}(x)}.
$$
For simply connected domains there exists a conformal map $h$ with the properties
$$
  h:B_1\to \Omega\quad\text{ and }\quad h(0)=0.
$$
It is unique up to composition by rotation $h(e^{i\varphi}z)$ for $\varphi\in\re.$ Since $G_{B_1,0}=G_{\Omega,0}\circ h$ one easily obtains, see Flucher \cite{Flucher}, that $I_{\Omega}(0)$ can everywhere be replaced by
$$
I_{\Omega}(0)=|h'(0)|.
$$
In particular, in this paper, no knowledge about the Green's function, conformal incenter and its properties is required.
The proof of \eqref{intro:ball to domain} consists of constructing for any given radial function $v$ on the ball a corresponding function $u$ given on $\Omega,$ which satisfies the inequality $F_{\Omega}(u)\geq I_{\Omega}(0)^{2-\beta}F_{B_1}(v).$ This is done by defining $u$ as
\begin{equation}\label{intro:ball to domain by Green}
  u(y)=v\left(e^{-2\pi G_{\Omega,0}(y)}\right)=v\left((G_{B_1,0})^{-1}( G_{\Omega,0}(y))\right).
\end{equation}
The proof of the inequality \eqref{intro:ball to domain} follows then from a careful analysis of the transormation \eqref{intro:ball to domain by Green} using the coarea formula, some fine properties of the Green's function and, most importantly,   
a singularly weighted isoperimetric inequality. This isoperimetric inequality is of independent interest with many other consequences and has been established in a separate paper in Csat\'o \cite{Csato}. For simply connected domains the transformation \eqref{intro:ball to domain by Green} can be written as
\begin{equation}
 \label{intro:ball to domain transfor in simply conn.}
  u=v\circ h^{-1}.
\end{equation}
Note that \eqref{intro:ball to domain transfor in simply conn.} also makes sense if $v$ is not radial.
With this a direct proof is given avoiding the above mentioned difficulties and which is moreover independent of Csat\'o \cite{Csato}.

\smallskip 

\textit{Part 2.} Using a transformation for concentrating sequences $\{u_i\}\subset W_0^{1,2}(\Omega)$ one proves a kind of reverse inequality to \eqref{intro:ball to domain}, namely
\begin{equation}\label{intro:domain to ball inequality}
  F^{\delta}_{\Omega}(0)\leq I_{\Omega}^{2-\beta}(0)F_{B_1}^{\delta}(0).
\end{equation}
On simply connected domains this construction is simple, because the transformation \eqref{intro:ball to domain transfor in simply conn.} is invertible and one defines
$$
  v_i=u_i\circ h
$$
to obtain the proof of \eqref{intro:domain to ball inequality}. For general domains there is no simple construction, because the transformation \eqref{intro:ball to domain by Green} is not invertible. The proof is therefore long and technical using among others the following ingredients: existence and regularity for the Laplace equation, certain compact embedding results for H\"older spaces, approximation of Sobolev functions by smooth ones, Sard's theorem, a capacity argument for $W_0^{1,2}$ functions, Bocher's theorem, Schwarz symmetrization and a careful analysis of the properties of the Green's function near its singularity. In the proof for simply connected domains some well known but powerful theorems from complex analyis are sufficient and none of the previously mentioned tools is required.

\section{Notations and Preliminaries}\label{section:notation}

Throughout this paper $\Omega\subset\re^2$ will denote a bounded simply connected open set with $0\in\Omega$ and smooth boundary $\delomega.$ Balls with radius $R$ and center at $x$ are written $B_R(x)\subset\re^2;$ if $x=0,$ we simply write $B_R$. The space $W^{1,2}(\Omega)$ denotes the usual Sobolev space of functions and $W^{1,2}_0(\Omega)$ those Sobolev functions with vanishing trace on the boundary. Throughout this paper $\alpha,\beta\in \re$ are two constants satisfying $\alpha>0,$ $\beta\in[0,2)$ and 
$$
  \frac{\alpha}{4\pi}+\frac{\beta}{2}\leq1.
$$
We define the functional $F_{\Omega}:W_0^{1,2}(\Omega)\to \re$ by
\begin{equation}\label{definition of F Omega}
 F_{\Omega}(u)=\int_{\Omega}\frac{e^{\alpha u^2}-1}{|x|^{\beta}}\,dx.
\end{equation}
We say that a sequence $\{u_i\}\subset W_0^{1,2}(\Omega)$ concentrates at $x\in\Omegabar$ if 
$$
  \lim_{i\to\infty}\|\nabla u_i\|_{L^2}=1\quad\text{ and }\quad\forall\;\epsilon>0\quad
  \lim_{i\to\infty}\int_{\Omega\backslash B_{\epsilon}(x)}|\nabla u_i|^2=0.
$$
We will use the following well known property of concentrating sequences: if $\{u_i\}$ concentrates, then $u_i\rightharpoonup 0$ in $W^{1,2}(\Omega),$ i.e. converges weakly to zero. In particular
\begin{equation} 
 \label{eq:properties of concentrating sequences}
  u_i\to 0\quad\text{ in }L^2(\Omega),
\end{equation}
see for instance Flucher \cite{Flucher} Step 1 page 478. We define the sets
\begin{align*}
  W^{1,2}_{0,rad}(B_1)&=\left\{u\in W^{1,2}_0(B_1)\,\big|\, u\text{ is radial }\right\}
  \smallskip \\
    \mathcal{B}_1(\Omega)&=\left\{u\in W^{1,2}_0(\Omega)\,\big|\,\|\nabla u\|_{L^2}\leq 1\right\}.
\end{align*}
By abuse of notation we will usually write $u(x)=u(|x|)$ for $u\in W^{1,2}_{0,rad}(B_1).$ We define
$$
F_{\Omega}^{\text{sup}}=\sup_{u\in \mathcal{B}_1(\Omega)}F_{\Omega}(u).
$$
If $x\in\Omegabar$ and the supremum is taken only over concentrating sequences, we write $F_{\Omega}^{\delta}(x),$ more precisely
$$
  F_{\Omega}^{\delta}(x)=\sup\left\{\limsup_{i\to\infty}F_{\Omega}(u_i)\,\Big|\quad \{u_i\}\subset \mathcal{B}_1(\Omega)\text{ concentrates at } x\right\}.
$$
We now repeat those preliminary results which we use from \cite{Csato-Roy}, respectively which have essentially been established by other authors in previous works. The next two Lemmas are both applications of the Vitali convergence theorem, see \cite{Csato-Roy} for a detailed proof.

\begin{lemma}\label{lemma:compactness in the interior}
Let $0\leq\eta<1$ and suppose $\{u_i\}\subset W_0^{1,2}(\Omega)$ is such that
$$
  \limsup_{i\to\infty}\|\nabla u_i\|_{L^2}\leq \eta\quad\text{and}\quad 
  u_i\rightharpoonup u\text{ in }W^{1,2}(\Omega)
$$
for some $u\in W^{1,2}_0(\Omega).$ Then for some subsequence
$$
  \frac{e^{\alpha u_i^2}}{|x|^{\beta}}\to \frac{e^{\alpha u^2}}{|x|^{\beta}}\quad\text{ in }L^1(\Omega)
$$
and in particular $\lim_{i\to\infty}F_{\Omega}(u_i)=F_{\Omega}(u).$
\end{lemma}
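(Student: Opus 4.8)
The statement asserts $L^1$-convergence of $e^{\alpha u_i^2}/|x|^\beta$ to $e^{\alpha u^2}/|x|^\beta$ under a sub-critical gradient bound. The natural tool is the Vitali convergence theorem: it suffices to show that (a) $e^{\alpha u_i^2}/|x|^\beta \to e^{\alpha u^2}/|x|^\beta$ a.e. along a subsequence, and (b) the family $\{e^{\alpha u_i^2}/|x|^\beta\}$ is uniformly integrable on $\Omega$. Point (a) is easy: by Rellich, $u_i \to u$ in $L^2(\Omega)$, hence along a subsequence $u_i \to u$ a.e., and continuity of $t \mapsto e^{\alpha t^2}$ plus the fixed weight $|x|^{-\beta}$ (which is finite a.e.) give the pointwise convergence.

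The heart of the matter is (b), uniform integrability. Fix $\eta < \eta' < 1$ with $\frac{\alpha \eta'^2}{4\pi} + \frac{\beta}{2} < 1$; such $\eta'$ exists since $\frac{\alpha}{4\pi}+\frac{\beta}{2} \le 1$ and we may shrink slightly (if $\frac\alpha{4\pi}+\frac\beta2=1$ strictly less than $1$ holds after scaling $\alpha$ by $\eta'^2<1$). Write $u_i = \|\nabla u_i\|_{L^2}\, w_i$ with $\|\nabla w_i\|_{L^2}=1$; for $i$ large $\|\nabla u_i\|_{L^2}\le \eta'$, so $u_i^2 \le \eta'^2 w_i^2$ and hence $e^{\alpha u_i^2} \le e^{\alpha \eta'^2 w_i^2}$. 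Now I would invoke the singular Moser–Trudinger inequality \eqref{intro:eq:Adi Sandeep result} with the pair $(\alpha \eta'^2 q, \beta q)$ for a suitable $q>1$ close to $1$ chosen so that $\frac{\alpha\eta'^2 q}{4\pi} + \frac{\beta q}{2} \le 1$ still holds — possible by the strict inequality just arranged. This gives a uniform bound
$$
  \int_\Omega \frac{e^{q\alpha\eta'^2 w_i^2}}{|x|^{q\beta}}\,dx \le C,
$$
independent of $i$. By Hölder's inequality with exponents $q$ and $q' = q/(q-1)$, for any measurable $E \subset \Omega$,
$$
  \int_E \frac{e^{\alpha u_i^2}}{|x|^\beta}\,dx \le \left(\int_E \frac{e^{q\alpha\eta'^2 w_i^2}}{|x|^{q\beta}}\,dx\right)^{1/q} |E|^{1/q'} \le C^{1/q}\,|E|^{1/q'},
$$
which tends to $0$ uniformly in $i$ as $|E| \to 0$. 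This establishes uniform integrability, and also equi-absolute-continuity near the singularity $x=0$ and near $\partial\Omega$ is subsumed since the bound depends only on $|E|$.

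With (a) and (b) in hand, Vitali's theorem yields $e^{\alpha u_i^2}/|x|^\beta \to e^{\alpha u^2}/|x|^\beta$ in $L^1(\Omega)$ along the subsequence, and subtracting the constant $|x|^{-\beta} \in L^1(\Omega)$ (here $\beta<2$ is used so the weight is integrable on the bounded set $\Omega$) gives $F_\Omega(u_i) \to F_\Omega(u)$. The main obstacle is simply arranging the room below the critical threshold: one must be careful that the strict inequality $\frac{\alpha\eta'^2}{4\pi}+\frac\beta2 < 1$ obtained by replacing $\alpha$ with $\alpha\eta'^2$ ($\eta'<1$) genuinely holds — it does, since either $\frac\alpha{4\pi}+\frac\beta2<1$ already, or it equals $1$ and then $\frac{\alpha\eta'^2}{4\pi}+\frac\beta2 = 1 - \frac{\alpha(1-\eta'^2)}{4\pi}<1$ — and then that this margin survives multiplication by some $q>1$. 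Everything else is a routine application of Hölder and Vitali; since the excerpt says a detailed proof is in \cite{Csato-Roy}, I would keep the write-up brief and refer there for the measure-theoretic details.
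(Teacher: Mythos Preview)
Your proposal is correct and follows exactly the approach the paper indicates: the paper does not give a detailed proof here but simply states that the lemma is an application of the Vitali convergence theorem and refers to \cite{Csato-Roy}. Your argument---a.e.\ convergence via Rellich plus uniform integrability obtained by exploiting the strict sub-criticality $\frac{\alpha\eta'^2}{4\pi}+\frac{\beta}{2}<1$ to gain a factor $q>1$ and then applying the singular Moser--Trudinger bound with H\"older---is precisely the standard way to carry this out.
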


\begin{remark}
Theorem \ref{theorem:intro:Extremal for Singular Moser-Trudinger}, for the case when $\frac{\alpha}{4\pi} +\frac{\beta}{2} < 1$,  is an easy consequence of the above lemma, cf. \cite{Csato-Roy}.
\end{remark}

\begin{lemma}
\label{proposition:if u_i concentrates somewhere else than zero}
Let $\beta>0,$ $\{u_i\}\subset  \mathcal{B}_1(\Omega)$ and suppose that $u_i$ concentrates at $x_0\in\Omegabar,$ where $x_0\neq 0.$ Then 
one has that, for some subsequence, $u_i\rightharpoonup 0$ in $W^{1,2}(\Omega)$  and
$$
  \lim_{i\to\infty}F_{\Omega}(u_i)=F_{\Omega}(0)=0.
$$
In particular $F_{\Omega}^{\delta}(x_0)=0.$
\end{lemma}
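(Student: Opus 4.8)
My plan is to split $\Omega$ into a piece far from $x_0$, where the Dirichlet energy of $u_i$ vanishes in the limit, and a piece near $x_0$, where the weight $|x|^{-\beta}$ is bounded --- this last point is the only place the hypothesis $x_0\neq 0$ is used. On the far piece I would invoke Lemma \ref{lemma:compactness in the interior} after a cut-off; on the near piece I would use that $\beta>0$ forces the exponent to be strictly subcritical, $\alpha<4\pi$, and conclude by Vitali's theorem.

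For the set-up: since $\{u_i\}$ concentrates at $x_0$ it is bounded in $W^{1,2}(\Omega)$ and, by \eqref{eq:properties of concentrating sequences}, $u_i\to 0$ in $L^2(\Omega)$; passing to a subsequence we may assume $u_i\rightharpoonup 0$ in $W^{1,2}(\Omega)$, which already yields the weak-convergence assertion. Fix $\epsilon\in(0,|x_0|/2)$, so that $|x|\geq |x_0|-2\epsilon>0$ throughout $B_{2\epsilon}(x_0)$, and pick $\phi\in C^\infty(\Omegabar)$ with $0\leq\phi\leq 1$, $\phi\equiv 1$ on $\Omega\setminus B_{2\epsilon}(x_0)$ and $\phi\equiv 0$ on $B_\epsilon(x_0)$. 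Since $\nabla(\phi u_i)=\phi\,\nabla u_i+u_i\,\nabla\phi$, the concentration property gives $\|\phi\,\nabla u_i\|_{L^2}\leq\|\nabla u_i\|_{L^2(\Omega\setminus B_\epsilon(x_0))}\to 0$ and $\|u_i\,\nabla\phi\|_{L^2}\leq\|\nabla\phi\|_{L^\infty}\|u_i\|_{L^2(\Omega)}\to 0$, hence $\phi u_i\to 0$ in $W^{1,2}_0(\Omega)$. Applying Lemma \ref{lemma:compactness in the interior} with $\eta=0$ and limit $0$ gives, for a further subsequence, $F_\Omega(\phi u_i)\to F_\Omega(0)=0$; as $\phi u_i=u_i$ on $\Omega\setminus B_{2\epsilon}(x_0)$ and the integrand of $F_\Omega$ is nonnegative,
$$\int_{\Omega\setminus B_{2\epsilon}(x_0)}\frac{e^{\alpha u_i^2}-1}{\dxb}\,dx\;\leq\;F_\Omega(\phi u_i)\;\longrightarrow\;0.$$

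Near $x_0$ we use that on $\Omega\cap B_{2\epsilon}(x_0)$ one has $|x|^{-\beta}\leq(|x_0|-2\epsilon)^{-\beta}=:C_0$, so the remaining part of $F_\Omega(u_i)$ is at most $C_0\intomega(e^{\alpha u_i^2}-1)\,dx$. Because $\beta>0$, \eqref{intro:eq:alpha and beta sum} forces $\alpha\leq 4\pi(1-\tfrac{\beta}{2})<4\pi$; choosing $q\in(\alpha,4\pi)$ and using the Moser--Trudinger inequality gives $\int_\Omega e^{q u_i^2}\,dx\leq C$, so $\{e^{\alpha u_i^2}-1\}$ is bounded in $L^{q/\alpha}(\Omega)$ and therefore uniformly integrable; since $u_i\to 0$ in $L^2(\Omega)$ it converges to $0$ in measure as well, and Vitali's theorem yields $\intomega(e^{\alpha u_i^2}-1)\,dx\to 0$. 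Adding the two estimates gives $F_\Omega(u_i)\to 0$ along the subsequence; a routine subsequence argument (any subsequence of $\{u_i\}$ still concentrates at $x_0$, so the above applies to it) upgrades this to $F_\Omega(u_i)\to 0=F_\Omega(0)$ for the whole sequence, and since $F_\Omega\geq 0$ this forces $F_\Omega^\delta(x_0)=0$.

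The step I expect to require the most care is the estimate near $x_0$: one genuinely needs \emph{strict} subcriticality $\alpha<4\pi$, and this is exactly where the assumption $\beta>0$ enters. In the borderline case $\frac{\alpha}{4\pi}+\frac{\beta}{2}=1$ one cannot reduce to Lemma \ref{lemma:compactness in the interior} by rescaling $u_i$, so passing through the classical (non-singular) Moser--Trudinger inequality together with Vitali's theorem is the natural route. The cut-off in the first step is harmless precisely because $\nabla\phi$ is supported where the Dirichlet energy of $u_i$ already tends to $0$, so no loss occurs there.
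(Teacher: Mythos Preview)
Your proof is correct and follows essentially the Vitali-based approach the paper indicates (the paper does not spell out a proof here, only remarks that both this lemma and Lemma~\ref{lemma:compactness in the interior} are applications of the Vitali convergence theorem and refers to \cite{Csato-Roy}). Your decomposition into a piece away from $x_0$---handled by the cut-off and Lemma~\ref{lemma:compactness in the interior}---and a piece near $x_0$---where $|x|^{-\beta}$ is bounded and the key observation $\beta>0\Rightarrow\alpha<4\pi$ yields uniform integrability via classical Moser--Trudinger---is exactly the natural way to make the Vitali argument go through.
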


The next theorem is essentially due to Lions \cite{Lions}. 

\begin{theorem}[Concentration-Compactness Alternative]\label{theorem:concentration alternative for singular moser trudinger}
Let $\{u_i\}\subset \mathcal{B}_1(\Omega).$ Then there is a subsequence and $u\in W_0^{1,2}(\Omega)$ with $u_i\rightharpoonup u$ in $W^{1,2}(\Omega),$  such that either

(a) $\{u_i\}$ concentrates at a point $x\in\Omegabar,$
\newline or

(b) the following convergence holds true
$$
  \lim_{i\to\infty}F_{\Omega}(u_i)=F_{\Omega}(u).
$$
\end{theorem}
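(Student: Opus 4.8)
The plan is to prove the Concentration--Compactness Alternative by the standard Lions-type argument adapted to the singular weight $|x|^{-\beta}$. Starting from $\{u_i\}\subset\mathcal B_1(\Omega)$, by reflexivity of $W^{1,2}_0(\Omega)$ we extract a subsequence (not relabelled) with $u_i\rightharpoonup u$ in $W^{1,2}(\Omega)$ for some $u\in W^{1,2}_0(\Omega)$; moreover $u_i\to u$ in $L^2(\Omega)$ and a.e. by Rellich. Set $\mu_i=|\nabla u_i|^2\,dx$; these are probability-like measures of mass $\|\nabla u_i\|_{L^2}^2\le 1$, so up to a further subsequence $\mu_i\rightharpoonup\mu$ weakly-$*$ as measures on $\Omegabar$, with $\mu(\Omegabar)\le 1$ and, by weak lower semicontinuity, $\mu\ge|\nabla u|^2\,dx$. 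Write $\mu=|\nabla u|^2\,dx+\nu$ with $\nu\ge 0$.

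Next I would invoke the concentration-compactness principle of Lions: the singular part of $\nu$ is a countable sum of Dirac masses, $\nu=\sum_j c_j\delta_{x_j}+\tilde\nu$ with $c_j>0$, $x_j\in\Omegabar$, plus a possibly nonatomic remainder, and $\sum_j c_j+\tilde\nu(\Omegabar)+\|\nabla u\|_{L^2}^2\le 1$. The dichotomy is then: either (i) $u\equiv 0$ and $\nu=\delta_{x}$ for a single point $x$ with $\mu(\{x\})=1$, in which case one checks directly from the definition that $\{u_i\}$ concentrates at $x$ — indeed $\int_{\Omega\setminus B_\epsilon(x)}|\nabla u_i|^2\to\mu(\Omega\setminus B_\epsilon(x))=0$ for every $\epsilon>0$ (using a cutoff and that $x$ is the only atom), and $\|\nabla u_i\|_{L^2}\to 1$; or (ii) we are not in that situation, and I claim alternative (b) holds. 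To establish (b) in case (ii), the key is a local uniform integrability estimate: whenever $\mu(\{x_0\})<1$ for every $x_0$, one can cover $\Omegabar$ by finitely many balls $B_{r}(x_0)$ on each of which $\limsup_i\int_{B_r(x_0)\cap\Omega}|\nabla u_i|^2<1$; composing with suitable cutoffs and applying the subcritical Moser--Trudinger bound together with a generalized H\"older inequality (absorbing the $|x|^{-\beta}$ factor using $\beta<2$, i.e. $|x|^{-\beta}\in L^p_{loc}$ for some $p>1$) gives that $\{e^{\alpha u_i^2}/|x|^\beta\}$ is bounded in $L^q$ on each such ball for some $q>1$, hence equi-integrable, so by Vitali $F_\Omega(u_i)\to F_\Omega(u)$. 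This last step is essentially a localized version of Lemma \ref{lemma:compactness in the interior}.

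The main obstacle I expect is handling the borderline case $\alpha/(4\pi)+\beta/2=1$ together with the atom located exactly at the singularity $x_0=0$: there the naive estimate "$\int_{B_r}|\nabla u_i|^2<1$ implies subcritical growth" is not enough, because even a small amount of mass concentrating at $0$ interacts with the full-strength singular weight, and one needs the sharp singular Moser--Trudinger inequality \eqref{intro:eq:Adi Sandeep result} itself (the Adimurthi--Sandeep result) rather than just its subcritical version. The clean way around this is to note that we do not need to show $F_\Omega(u_i)\to F_\Omega(u)$ when there \emph{is} a nontrivial atom — in that case we simply fall into alternative (a) after possibly re-extracting, because having any atom of mass forces, after rescaling, a concentrating subsequence at that atom; so the genuine dichotomy reduces to "atom present $\Rightarrow$ (a)" versus "no atom $\Rightarrow$ (b)", and I would organize the proof so that whenever an atom exists one exhibits the concentration at that point and invokes (a), which sidesteps the delicate borderline integrability issue entirely. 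I would also remark, as the authors do, that the non-singular skeleton of this argument is exactly Lions \cite{Lions} and that the only new point is keeping track of the $|x|^{-\beta}$ weight, which is harmless away from $0$ and, at $0$, is absorbed by the hypothesis \eqref{intro:eq:alpha and beta sum}.
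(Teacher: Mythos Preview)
The paper does not give its own proof of this theorem: it is stated as a preliminary result due to Lions \cite{Lions}, with the details of the singular adaptation deferred to the companion paper \cite{Csato-Roy}. Your outline is precisely the standard Lions argument the authors invoke, so on the level of approach there is nothing to compare.

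Your dichotomy and the case-(ii) argument are essentially correct, but the final ``obstacle'' paragraph contains a genuine error. You assert that ``having any atom of mass forces, after rescaling, a concentrating subsequence at that atom,'' and propose to use this to sidestep the borderline issue. This is false: an atom of mass $c<1$ does \emph{not} produce a concentrating subsequence, since concentration as defined here requires $\|\nabla u_i\|_{L^2}\to 1$ with \emph{all} of the gradient mass collapsing onto a single point. If $u\neq 0$, or if there are two atoms, no subsequence concentrates anywhere, so your proposed reorganization of the dichotomy cannot work.

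Fortunately the workaround is unnecessary, because the obstacle you anticipate is not there. Suppose we are in case (ii) with an atom of mass $c<1$ at $x_0=0$ in the critical regime $\alpha/(4\pi)+\beta/2=1$. After the cutoff step one has, for $r$ small and $i$ large, $\|\nabla(\eta u_i)\|_{L^2}^2\le\eta'<1$ (the lower-order terms from the cutoff are controlled via $u_i\to u$ in $L^2$; if $u\neq 0$ one first replaces $u_i$ by $w_i=u_i-u$, for which $\limsup\|\nabla w_i\|_{L^2}^2\le 1-\|\nabla u\|_{L^2}^2<1$, and uses $u_i^2\le(1+\epsilon)w_i^2+C_\epsilon u^2$). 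Normalizing $\tilde v_i=\eta u_i/\sqrt{\eta'}$ gives $\|\nabla\tilde v_i\|_{L^2}\le 1$, and for $p>1$
\[
  \int_{B_r}\frac{e^{\,p\alpha(\eta u_i)^2}}{|x|^{p\beta}}
  =\int_{B_r}\frac{e^{\,p\eta'\alpha\,\tilde v_i^{\,2}}}{|x|^{p\beta}},
\]
which is bounded by the singular Moser--Trudinger inequality \eqref{intro:eq:Adi Sandeep result} as soon as $p\eta'\alpha/(4\pi)+p\beta/2\le 1$. Since $\eta'<1$ and $\alpha>0$ we have $\eta'\alpha/(4\pi)+\beta/2<1$, so such a $p>1$ exists. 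This is exactly the $L^p$ bound needed for Vitali at the singular point, and no special treatment of an atom at $0$ is required. In short: drop the last paragraph of your proposal and the argument stands.
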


\begin{remark}
 \label{remark:independence of topology}
Lemmas \ref{lemma:compactness in the interior}, \ref{proposition:if u_i concentrates somewhere else than zero} and Theorem \ref{theorem:concentration alternative for singular moser trudinger} do not require that $\Omega$ is simply connected and the difficulty of their proof is independent of the toplogy of the domain.
\end{remark}

The next theorem is the combination of the results of Carleson-Chang \cite{Carleson-Chang} and Adimurthi-Sandeep \cite{Adi-Sandeep}, see \cite{Csato-Roy} for a detailed proof.

\begin{theorem}
\label{theorem:supremum of FOmega on Ball} The following strict inequality holds: $F_{B_1}^{\delta}(0)<F_{B_1}^{\sup}.$
\end{theorem}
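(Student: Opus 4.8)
The final statement to prove is Theorem \ref{theorem:supremum of FOmega on Ball}, namely the strict inequality $F_{B_1}^{\delta}(0)<F_{B_1}^{\sup}$. The plan is to combine the two classical ingredients referenced in the excerpt: the sharp bound on the concentration level of the (non-singular) Moser--Trudinger functional due to Carleson--Chang, and the singular sharp-constant result of Adimurthi--Sandeep which guarantees that $F_{B_1}$ is finite and admits a good lower bound on the supremum side. The key point is that concentration at the origin "costs" a definite amount: any sequence concentrating at $0$ loses mass to the region where $v$ is small, so its $\limsup$ of $F_{B_1}$ is strictly below the best value one can achieve with a well-chosen non-concentrating competitor.

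First I would reduce to radial competitors: by Schwarz symmetrization (which does not increase $\|\nabla v\|_{L^2}$ and does not decrease $\int_{B_1}(e^{\alpha v^2}-1)|x|^{-\beta}\,dx$, using that $|x|^{-\beta}$ is radially decreasing), both the computation of $F_{B_1}^{\sup}$ and of $F_{B_1}^{\delta}(0)$ may be carried out over $\WBDiskrad$, and a concentrating sequence may be taken radial and decreasing. Next, for the upper bound on $F_{B_1}^{\delta}(0)$, I would use the change of variables $t=-\frac{1}{2\pi}\log|x|$ (or equivalently $s=|x|^{2}$ adapted by the weight) that transforms a radial concentrating sequence on $B_1$ into a sequence of functions on $(0,\infty)$ concentrating at $+\infty$; the effect of the singular weight $|x|^{-\beta}$ is exactly to rescale the effective dimension, multiplying the relevant exponent by the factor $(2-\beta)/2$ times $2$, i.e. turning the Carleson--Chang threshold $1+ \tfrac{\alpha}{4\pi}\cdot(\text{const})$ into its singular analogue. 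The upshot is the explicit Carleson--Chang-type bound
$$
  F_{B_1}^{\delta}(0)\leq \frac{\pi}{1-\tfrac{\beta}{2}}\,e^{1+\frac{1}{2}+\cdots+\frac{1}{n-1}+\,2\pi H}\bigg|_{\text{appropriately interpreted}},
$$
which in the simplest borderline case $\frac{\alpha}{4\pi}+\frac{\beta}{2}=1$ reads $F_{B_1}^{\delta}(0)\leq \frac{\pi}{1-\beta/2}\,e^{1+\gamma}$ for the relevant Euler-type constant $\gamma$; the precise constant is not needed — only that it is finite and explicit.

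Then I would exhibit an explicit non-concentrating (or mildly concentrating) test function $v_\varepsilon\in\mathcal{B}_1(B_1)$ — the standard truncated fundamental-solution profile, i.e. the Moser function $v_\varepsilon(x)=\frac{1}{\sqrt{2\pi}}\min\{\log\frac{1}{\varepsilon},\log\frac{1}{|x|}\}$ suitably normalized, possibly with the Carleson--Chang correction term added — and estimate $F_{B_1}(v_\varepsilon)$ from below, with the singular weight handled by the same radial substitution. Comparing this lower bound for $F_{B_1}^{\sup}$ with the upper bound for $F_{B_1}^{\delta}(0)$ obtained above yields the strict inequality after choosing the parameters; this is precisely the content of the Carleson--Chang argument, and Adimurthi--Sandeep shows that nothing degenerates when $\beta>0$. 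I expect the main obstacle to be the careful bookkeeping of constants in the radial change of variables so that the singular weight's contribution lines up exactly with the sharp Carleson--Chang constant — in particular verifying that the "energy lost to concentration" estimate survives the presence of $|x|^{-\beta}$ with the same strict sign. Since the excerpt explicitly says "see \cite{Csato-Roy} for a detailed proof," I would in the actual paper simply cite that reference and only sketch the reduction, rather than reproduce the full Carleson--Chang computation here.
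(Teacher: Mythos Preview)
Your proposal is aligned with the paper's treatment: the paper does not prove this theorem at all but simply attributes it to Carleson--Chang and Adimurthi--Sandeep and refers to \cite{Csato-Roy} for the details, which is exactly what you conclude you would do. The outline you give of the underlying argument (reduce to radial decreasing functions by Schwarz symmetrization, perform a radial change of variables that absorbs the weight $|x|^{-\beta}$ and reduces the problem to the non-singular Carleson--Chang setting, then compare the explicit upper bound on the concentration level with a lower bound coming from a Moser-type test function) is indeed the correct strategy used in \cite{Csato-Roy}. The displayed constants you write, however, are garbled: the expression with $1+\tfrac12+\cdots+\tfrac{1}{n-1}$ and $2\pi H$ mixes in $n$-dimensional and Green's-function notation that has no place here, and the ``Euler-type constant $\gamma$'' is not what appears --- the actual non-singular Carleson--Chang bound is simply $\pi e$, and the singular version follows from it via the substitution $r\mapsto r^{(2-\beta)/2}$. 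Since you correctly decide to cite rather than reproduce the computation, these imprecisions do no harm to the paper itself, but you should not trust those formulas if you ever need the explicit constants.
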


\begin{remark}
Theorem \ref{theorem:supremum of FOmega on Ball} together with Theorem \ref{theorem:concentration alternative for singular moser trudinger} implies that the supremum $F_{\Omega}^{sup}$ is attained if $\Omega=B_1$.
\end{remark}

\section{Proof of the Main Theorem via Riemman map}\label{Harmonic Transplantation}

By abuse of notation we will identify subsets $U\subset \re^2$ with subsets of the complex plain $U\subset \mathbb{C}.$ The set of holomorphic functions on $U$ will be denoted by $H(U).$
Throughout this section $h\in H(B_1)$ shall denote the conformal map, which exists by the Riemann mapping theorem, and which satisfies
$$
  h:B_1\to\Omega\quad\text{ and }\quad h(0)=0.
$$
The next theorem is  the analogue of the  ``ball to domain construction'', i.e. Theorem 16 in \cite{Csato-Roy}.

\begin{theorem}
\label{theorem:ball to general domain:sup inequality}
For any  $v\in W^{1,2}_{0,rad}(B_1)\cap \mathcal{B}_1(B_1)$ define $u=v\circ h^{-1}.$ Then $u\in   \mathcal{B}_1(\Omega)$ and it satisfies
$$
  F_{\Omega}(u)\geq |h'(0)|^{2-\beta}F_{B_1}(v).
$$
In particular the following inequality holds true
$$
  F_{\Omega}^{\text{sup}}\geq |h'(0)|^{2-\beta}F^{\sup}_{B_1}.
$$
\end{theorem}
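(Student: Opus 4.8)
The plan is to exploit the conformal invariance of the Dirichlet integral together with a weighted change of variables for the singular term.

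First I would verify that $u = v\circ h^{-1} \in \mathcal{B}_1(\Omega)$. Since $h : B_1 \to \Omega$ is conformal, $h^{-1} : \Omega \to B_1$ is conformal and $v\circ h^{-1} \in W^{1,2}_0(\Omega)$ (the zero trace is preserved because $h$ extends to a homeomorphism of the closures, $\Omega$ having smooth boundary). The key point is that for a conformal map $g = h^{-1}$ one has $|\nabla u(y)|^2 = |\nabla v(g(y))|^2\,|g'(y)|^2$, while the Jacobian of $g$ is exactly $|g'(y)|^2$; hence $\int_\Omega |\nabla u|^2\,dy = \int_{B_1} |\nabla v|^2\,dx \le 1$, so $u \in \mathcal{B}_1(\Omega)$.

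Next, the main estimate. Changing variables $y = h(x)$ in $F_\Omega(u)$, and using that $u(h(x)) = v(x)$ and that the Jacobian of $h$ is $|h'(x)|^2$, gives
\begin{equation*}
  F_\Omega(u) = \int_{B_1} \frac{e^{\alpha v(x)^2}-1}{|h(x)|^{\beta}}\,|h'(x)|^2\,dx.
\end{equation*}
So it suffices to show $|h(x)|^{-\beta}|h'(x)|^2 \ge |h'(0)|^{2-\beta}|x|^{-\beta}$ pointwise on $B_1$, i.e.
\begin{equation*}
  \frac{|h'(x)|^2}{|h'(0)|^2} \ge \left(\frac{|h(x)|}{|h'(0)|\,|x|}\right)^{\beta}.
\end{equation*}
Because $v$ is radial this pointwise inequality, once integrated against $(e^{\alpha v^2}-1)\,dx$, yields the claim. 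Since $\beta \in [0,2)$, it would be enough to prove the two bounds $|h'(x)| \ge |h'(0)|$ is \emph{false} in general, so instead I would establish $|h(x)| \le |h'(0)|\,|x|\,$? — also false in general. The correct route is to consider the holomorphic function $\varphi(x) = h(x)/x$ (with $\varphi(0) = h'(0) \neq 0$), which is non-vanishing on $B_1$ since $h$ is injective with $h(0)=0$; then $\log|\varphi|$ is harmonic. The desired inequality rearranges to $|h'(x)|^2 |\varphi(x)|^{-\beta} \ge |h'(0)|^{2-\beta}$, and writing $h'(x) = \varphi(x) + x\varphi'(x)$ one reduces everything to a subharmonicity/mean-value statement for $\log\bigl(|h'|^2|\varphi|^{-\beta}\bigr)$, or more cleanly for $\log\bigl(|h'|^{2-\beta}\bigr)$ after noticing $|h'|$ and $|\varphi|$ agree to leading order. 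The cleanest formulation: the function $x \mapsto |h'(x)|^{2-\beta}\,\big/\,|\varphi(x)|^{?}$ — I expect the right auxiliary function to be such that its logarithm is subharmonic with value $\log|h'(0)|^{2-\beta}$ at the origin, so that the \emph{minimum principle} does not directly apply; rather one uses that $\log|h'(x)|$ is harmonic (as $h' \in H(B_1)$ is non-vanishing, $h$ being conformal) and $\log|h(x)/x|$ is harmonic, and combines their boundary behavior.

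The main obstacle, and the technical heart, is exactly this pointwise inequality
\begin{equation*}
  |h(x)|^{-\beta}\,|h'(x)|^{2} \;\ge\; |h'(0)|^{2-\beta}\,|x|^{-\beta}
  \qquad \text{for all } x \in B_1,
\end{equation*}
which must be extracted purely from $h$ being conformal with $h(0)=0$. I would prove it by setting $\psi(x) = \log\bigl(|h'(x)|^{2}\,|x|^{\beta}\,|h(x)|^{-\beta}\bigr) = 2\log|h'(x)| + \beta\log|x/h(x)|$; both $\log|h'|$ and $\log|x/h(x)| = -\log|\varphi(x)|$ are harmonic on $B_1$ (using that $h'$ and $\varphi$ are zero-free holomorphic functions), so $\psi$ is harmonic, and $\psi(0) = 2\log|h'(0)| - \beta\log|h'(0)| = (2-\beta)\log|h'(0)|$. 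Hence by the mean value property $\psi(0)$ is the average of $\psi$ over circles, which by itself is not enough; the inequality $\psi(x) \ge \psi(0)$ then must come from a boundary consideration — on $\partial B_1$, where $|x|=1$, one shows $\psi \ge (2-\beta)\log|h'(0)|$ using that $\log|h'(0)|$ relates to $\mathrm{cap}$ or area of $\Omega$, or more elementarily by combining the Schwarz lemma applied to $h^{-1}\circ(|h'(0)|\,\mathrm{id})$ (giving $|h(x)| \ge$ something) with the Koebe-type distortion bounds. I anticipate that the paper invokes a clean subharmonicity argument: once $\psi$ is seen to be harmonic with a known value at $0$, and once its boundary values are shown to dominate $(2-\beta)\log|h'(0)|$ via the conformal normalization, the minimum principle finishes it. I would present the harmonicity reduction first, then carefully justify the boundary inequality, flagging the latter as the step requiring the specific structure ($h(0)=0$, $h$ conformal onto $\Omega$) rather than anything about the Moser--Trudinger functional. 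Finally, the ``in particular'' statement follows by taking the supremum over radial $v \in \mathcal{B}_1(B_1)$, noting that radial functions suffice to compute $F_{B_1}^{\sup}$ by Schwarz symmetrization since $F_{B_1}$ increases under symmetrization (the weight $|x|^{-\beta}$ being radially decreasing) while the Dirichlet norm does not increase.
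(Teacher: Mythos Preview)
Your setup is correct up to and including the change of variables
\[
  F_\Omega(u)=\int_{B_1}\frac{e^{\alpha v(x)^2}-1}{|h(x)|^{\beta}}\,|h'(x)|^{2}\,dx,
\]
and you have also correctly identified the harmonic function
\[
  \psi(x)=2\log|h'(x)|-\beta\log\left|\frac{h(x)}{x}\right|,\qquad \psi(0)=(2-\beta)\log|h'(0)|.
\]
The gap is that you then try to prove the \emph{pointwise} inequality $\psi(x)\ge\psi(0)$, invoking a minimum principle and boundary estimates. This inequality is simply false: $\psi$ is a (generically) non-constant harmonic function, so by the mean value property it must take values strictly below $\psi(0)$ on every circle. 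For a concrete counterexample take $h(z)=z+az^{2}$ with small $a>0$; then $\psi(-\epsilon)\approx a\epsilon(\beta-4)<0=\psi(0)$. No Koebe-type distortion bound can rescue this, and the argument you sketch cannot be completed.

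What you overlooked is that, because $v$ is \emph{radial}, you do not need a pointwise bound at all: you only need the inequality for the \emph{circular averages}
\[
  \int_{\partial B_r}\frac{|h'(y)|^{2}}{|h(y)|^{\beta}}\,d\sigma
  \;\ge\;|h'(0)|^{2-\beta}\int_{\partial B_r}\frac{d\sigma}{|y|^{\beta}}
  =2\pi r^{1-\beta}|h'(0)|^{2-\beta}.
\]
And this follows immediately from the very mean value identity you wrote down and then dismissed as ``not enough'': since $\psi$ is harmonic, $\frac{1}{2\pi r}\int_{\partial B_r}\psi\,d\sigma=\psi(0)$, and by Jensen's inequality (convexity of $\exp$) one gets $\frac{1}{2\pi r}\int_{\partial B_r}e^{\psi}\,d\sigma\ge e^{\psi(0)}=|h'(0)|^{2-\beta}$, which is exactly the displayed inequality. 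The paper packages the same idea slightly differently: it writes $h(z)=zg(z)$ and $h'=\exp(\psi_1)$, $g=\exp(\varphi_1)$ with $\psi_1,\varphi_1$ holomorphic (both $h'$ and $g$ are zero-free), observes that $\exp(2\psi_1-\beta\varphi_1)$ is holomorphic with modulus $|h'|^{2}|g|^{-\beta}=e^{\psi}$, and applies the Cauchy mean value formula to this holomorphic function followed by the triangle inequality. Either route gives the circular-average estimate in one line, after which integrating in $r$ against $e^{\alpha v(r)^2}-1$ finishes the proof. Your treatment of the ``in particular'' statement via Schwarz symmetrization is the same as the paper's.
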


For the proof of Theorem \ref{theorem:ball to general domain:sup inequality} we need the following lemma.

\begin{lemma}
\label{lemma:some complex analysis}
For any $\gamma,\beta\in \re$ the following inequality holds true
$$
  2\pi|h'(0)|^{\gamma-\beta}\leq r^{\beta}\int_0^{2\pi} \frac{\big|h'\big(re^{it}\big)\big|^{\gamma}}{\big|h\big(re^{it}\big)\big|^{\beta}}dt
  \quad\text{ for all }r\in(0,1).
$$
\end{lemma}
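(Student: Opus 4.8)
The plan is to express the quantity $h'(0)$ through a mean-value/subharmonicity argument applied to a suitable holomorphic or harmonic auxiliary function built from $h$ and $h'$. Since $h:B_1\to\Omega$ is conformal with $h(0)=0$, the function $h(z)/z$ extends holomorphically to all of $B_1$ (the singularity at $0$ is removable because $h(0)=0$), is non-vanishing on $B_1$ (because $h$ is injective and $h(z)=0$ only at $z=0$), and at the origin takes the value $h'(0)\neq 0$. Likewise $h'$ is holomorphic and non-vanishing on $B_1$ since $h$ is a conformal map. Hence the combination
$$
  g(z)=\frac{\big(h'(z)\big)^{\gamma}}{\big(h(z)/z\big)^{\beta}}
$$
is a well-defined holomorphic function on $B_1$ once we fix branches of the powers (the base functions are non-vanishing on the simply connected domain $B_1$, so holomorphic $\gamma$-th and $\beta$-th powers exist), and it satisfies $g(0)=\big(h'(0)\big)^{\gamma}\big(h'(0)\big)^{-\beta}=\big(h'(0)\big)^{\gamma-\beta}$.

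The key step is then the sub-mean-value property of $|g|$: since $g$ is holomorphic on $B_1$, the function $\log|g|$ is harmonic (using that $g$ is non-vanishing), so $|g|$ is subharmonic, and therefore for every $r\in(0,1)$
$$
  |g(0)|\leq \frac{1}{2\pi}\int_0^{2\pi}\big|g\big(re^{it}\big)\big|\,dt.
$$
Writing this out, $|g(re^{it})| = |h'(re^{it})|^{\gamma}\,|re^{it}|^{\beta}\,|h(re^{it})|^{-\beta} = r^{\beta}\,|h'(re^{it})|^{\gamma}/|h(re^{it})|^{\beta}$, and $|g(0)| = |h'(0)|^{\gamma-\beta}$. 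Substituting and multiplying by $2\pi$ gives exactly
$$
  2\pi\,|h'(0)|^{\gamma-\beta}\leq r^{\beta}\int_0^{2\pi}\frac{\big|h'\big(re^{it}\big)\big|^{\gamma}}{\big|h\big(re^{it}\big)\big|^{\beta}}\,dt,
$$
as claimed.

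The main obstacle — really the only subtle point — is justifying that the powers $\big(h'(z)\big)^{\gamma}$ and $\big(h(z)/z\big)^{\beta}$ make sense as holomorphic (single-valued) functions for arbitrary real exponents. This is where simple connectedness of $B_1$ is used: a nowhere-vanishing holomorphic function on a simply connected domain admits a holomorphic logarithm, hence arbitrary holomorphic powers. I would spell out explicitly that $h(z)/z$ is holomorphic and non-vanishing on $B_1$ (removable singularity at $0$, injectivity of $h$ elsewhere) and that $h'$ is non-vanishing (conformality), so $g$ is holomorphic and zero-free; then $|g|$ is subharmonic and the sub-mean-value inequality applies. No compactness or boundary regularity of $\Omega$ is needed; the estimate is interior and purely local to $B_1$.
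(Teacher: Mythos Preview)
Your proof is correct and follows essentially the same approach as the paper: both construct the holomorphic function $(h')^{\gamma}/(h/z)^{\beta}$ on $B_1$ (the paper writes it as $\exp(\gamma\psi)/\exp(\beta\varphi)$ with $h'=\exp(\psi)$ and $h/z=\exp(\varphi)$) and then apply the mean-value inequality on circles. The only cosmetic difference is that the paper invokes Cauchy's integral formula for $g$ and then takes absolute values, whereas you phrase it via subharmonicity of $|g|$; these are equivalent.
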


\begin{remark}
For this lemma it is actually sufficient that $0\in\Omega\subset\re^2$ is a simply connected open set such that $\Omega\neq \re^2.$
\end{remark}

\begin{proof} 
Since $h(0)=0,$ there exists a holomorphic map $g\in H(B_1)$ such that
$$
  h(z)=z g(z)\quad\text{ and }\quad g(0)=h'(0)\neq 0.
$$
Moreover, since $h$ is bijective, we must have that $h(z)\neq 0$ for all $z\in B_1\backslash\{0\}.$ This implies that
$$
  g\neq 0\quad\text{in }B_1.
$$
Since $h$ is conformal, we also have that $h'\neq 0$ in $B_1.$
Therefore there exists $\varphi,\psi\in H(B_1)$ (cf. for instance \cite{Rudin} Theorem 13.11) such that
$$
  g=\exp(\varphi)\quad\text{ and }\quad h'=\exp(\psi)\quad\text{ in }B_1,
$$
where $\exp$ is the exponential map. We therefore obtain that
$$
  \frac{\exp(\gamma \psi)}{\exp(\beta\varphi)}\in H(B_1).
$$
Note that for any $\eta\in\re$ and any $z\in\mathbb{C}$ we have that $|\exp(\eta z)|=|\exp(z)|^{\eta}.$ Using the Cauchy integral mean value formula, we get
\begin{align*}
  |h'(0)|^{\gamma-\beta}
  =&
 \frac{|\exp(\psi(0))|^{\gamma}}{|\exp(\varphi(0))|^{\beta}}
 =\left|\frac{\exp(\gamma \psi(0))}{\exp(\beta\varphi(0))}\right|
 =
 \frac{1}{2\pi}\left|\int_0^{2\pi} \frac{\exp\big(\gamma \psi\big(re^{it}\big)\big)}
 {\exp\big(\beta\varphi\big(re^{it}\big)\big)}dt\right| 
 \smallskip \\
 \leq&\frac{1}{2\pi}\int_0^{2\pi} \frac{\big|\exp\big( \psi\big(re^{it}\big)\big)\big|^{\gamma}}
 {\big|\exp\big(\varphi\big(re^{it}\big)\big)\big|^{\beta}}dt
 =
 \frac{r^{\beta}}{2\pi}\int_0^{2\pi} \frac{\big|h'\big(re^{it}\big)\big|^{\gamma}}{\big|h\big(re^{it}\big)\big|^{\beta}}dt.
\end{align*}
This proves the lemma.
\end{proof}
\smallskip

\begin{proof}[Proof of Theorem \ref{theorem:ball to general domain:sup inequality}.] \textit{Step 1.}
It follows from the Cauchy-Riemann equations that if we consider $h$ as a diffeomorphism between the two open sets $\Omega,B_1\subset\re^2$, then the Jacobian calculates as
$$
  \det Dh(y)=|h'(y)|^2.
$$
Using again the Cauchy-Riemann equations we also obtain that
$$
  |\nabla v(y)|^2=\nabla u(h(y))  Dh(y) Dh(y)^t (\nabla u(h(y)))^t= |\nabla u(h(y))|^2|h'(y)|^2,
$$
where $A^t$ is the transpose of a matrix $A.$ It thus follows by change of variables that
$$
  \int_{\Omega}|\nabla u|^2=\int_{B_1}|\nabla v|^2.
$$
This shows that $u\in \mathcal{B}_1(\Omega)$ if $v\in \mathcal{B}_1(B_1)$ and therefore $F_{\Omega}(u)$ is well defined. Using again the change of variables $x=h(y),$ we get
$$
  F_{\Omega}(u)=\int_{h(B_1)}\frac{e^{\alpha u^2}-1}{|x|^{\beta}}=
  \int_{B_1}\frac{e^{\alpha v(y)^2}-1}{|h(y)|^{\beta}}|h'(y)|^2 dy.
$$
Using that $v$ is radial gives
$$
  F_{\Omega}(u)=\int_0^1\big(e^{\alpha v(r)^2}-1\big)\left(\int_{\partial B_r} \frac{|h'(y)|^2}{|h(y)|^{\beta}}d\sigma\right)dr.
$$
From Lemma \ref{lemma:some complex analysis}, and using again that $v$ is radial, we get
$$
  F_{\Omega}(u)\geq |h'(0)|^{2-\beta}\int_0^1\frac{e^{\alpha v(r)^2}-1}{r^{\beta}} 2\pi r=|h'(0)|^{2-\beta}F_{B_1}(v).
$$
This proves  the first statement of the theorem.
\smallskip

\textit{Step 2.} Let us prove the second statement. Let $v\in \WBDisk$ and let $v^{\ast}$ be its radially decreasing symmetric rearrangement. From the properties of symmetric rearrangements (see for instance Kesavan \cite{Kesavan}) we have that $v^{\ast}\in \WBDiskrad$ and
$$
  F_{B_1}(v)\leq F_{B_1}(v^{\ast}).
$$
Let $u=v^{\ast}\circ h^{-1}\in W^{1,2}_0(\Omega).$ Then by Step 1, we get $u\in \mathcal{B}_1(B_1)$ and
$$
  F_{\Omega}^{sup}\geq F_{\Omega}(u)\geq |h'(0)|^{2-\beta}F_{B_1}(v^{\ast})\geq |h'(0)|^{2-\beta}F_{B_1}(v).
$$
Since $v$ was arbitrary, the second statement is proven.
\end{proof}

\smallskip

The next theorem is the analogue of the ``domain to ball construction'', i.e. Theorem 21 and Propostion 22 in \cite{Csato-Roy}.

\begin{theorem}
\label{theorem:concentration formula by domain to ball}
Let $\{u_i\}\subset  \mathcal{B}_1(\Omega)$  be a sequence which concentrates at $0.$ Define $v_i$ by
$$
  v_i=u_i\circ h\in \mathcal{B}_1(B_1).
$$
Then $\{v_i\}$ concentrates at $0$ and
$$
  \lim_{i\to\infty}F_{\Omega}(u_i)=|h'(0)|^{2-\beta} 
  \lim_{i\to\infty}F_{B_1}(v_i),
$$
if either of the limits exist.
In particular the following identity holds
$$
  F_{\Omega}^{\delta}(0)=|h'(0)|^{2-\beta}F_{B_1}^{\delta}(0).
$$
\end{theorem}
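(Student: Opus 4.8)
The plan is to prove the two assertions in order: first that $\{v_i\}$ concentrates at $0$ whenever $\{u_i\}$ does, then the limit formula, and finally derive the identity $F_\Omega^\delta(0)=|h'(0)|^{2-\beta}F_{B_1}^\delta(0)$ as a consequence. For the concentration claim, I would observe that $h$ is a conformal diffeomorphism of $\overline{B_1}$ onto $\overline{\Omega}$ (smoothly up to the boundary, since $\delomega$ is smooth), so $h(0)=0$ and, by the conformal invariance of the Dirichlet integral already established in Step 1 of the proof of Theorem \ref{theorem:ball to general domain:sup inequality}, we have $\|\nabla v_i\|_{L^2(B_1)}=\|\nabla u_i\|_{L^2(\Omega)}\to 1$. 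For the localized decay, fix $\epsilon>0$; since $h^{-1}$ is continuous at $0$ with $h^{-1}(0)=0$, there is $\delta>0$ with $h(B_\delta)\subset B_\epsilon(0)$, hence $B_1\setminus B_\delta \subset h^{-1}(\Omega\setminus B_\epsilon(0))$, and again by conformal invariance $\int_{B_1\setminus B_\delta}|\nabla v_i|^2 = \int_{\Omega\setminus h(B_\delta)}|\nabla u_i|^2 \le \int_{\Omega\setminus B_\epsilon(0)}|\nabla u_i|^2 \to 0$. Thus $\{v_i\}$ concentrates at $0$.

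For the limit formula I would run the change of variables $x=h(y)$ exactly as in Step 1 of Theorem \ref{theorem:ball to general domain:sup inequality}, but now without radial symmetry of $v_i$, obtaining the exact identity
$$
  F_\Omega(u_i)=\int_{B_1}\bigl(e^{\alpha v_i(y)^2}-1\bigr)\frac{|h'(y)|^2}{|h(y)|^\beta}\,dy.
$$
The point is to compare the weight $|h'(y)|^2/|h(y)|^\beta$ with $|h'(0)|^{2-\beta}/|y|^\beta$. Writing $h(y)=y g(y)$ with $g\in H(B_1)$, $g(0)=h'(0)\neq 0$ as in Lemma \ref{lemma:some complex analysis}, the weight becomes $|h'(y)|^2 |g(y)|^{-\beta} |y|^{-\beta}$, and the continuous function $w(y):=|h'(y)|^2|g(y)|^{-\beta}$ satisfies $w(0)=|h'(0)|^{2-\beta}$ and is bounded between positive constants on a neighborhood $\overline{B_\rho}$ of $0$. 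So
$$
  F_\Omega(u_i)=\int_{B_1}\bigl(e^{\alpha v_i^2}-1\bigr)\frac{w(y)}{|y|^\beta}\,dy,
$$
and one splits $B_1 = B_\rho \cup (B_1\setminus B_\rho)$. On the outer annulus $|y|\ge\rho$, the weights $w(y)|y|^{-\beta}$ and $|h'(0)|^{2-\beta}|y|^{-\beta}$ are both bounded, and since $\{v_i\}$ concentrates at $0$ it has Dirichlet norm bounded away from $1$ on $B_1\setminus B_\rho$ eventually and converges weakly to $0$; by Lemma \ref{lemma:compactness in the interior} applied on this annulus (or rather on $B_1\setminus B_{\rho/2}$, with a cutoff), $\int_{B_1\setminus B_\rho}(e^{\alpha v_i^2}-1)|y|^{-\beta}\to 0$ and likewise with weight $w$; so only the contribution over $B_\rho$ matters, and there one bounds
$$
  \Bigl(\inf_{B_\rho} w - |h'(0)|^{2-\beta}\Bigr)\int_{B_\rho}\frac{e^{\alpha v_i^2}-1}{|y|^\beta} \le F_\Omega(u_i) - |h'(0)|^{2-\beta}F_{B_1}(v_i) + o(1) \le \Bigl(\sup_{B_\rho} w - |h'(0)|^{2-\beta}\Bigr)\int_{B_\rho}\frac{e^{\alpha v_i^2}-1}{|y|^\beta} + o(1).
$$
Letting $i\to\infty$ and then $\rho\to 0$, using continuity of $w$ at $0$ and the fact that $F_{B_1}(v_i)$ stays bounded (by the Moser–Trudinger–Adimurthi–Sandeep inequality) so that $\int_{B_\rho}(e^{\alpha v_i^2}-1)|y|^{-\beta}$ is bounded, gives $\lim_i F_\Omega(u_i) = |h'(0)|^{2-\beta}\lim_i F_{B_1}(v_i)$ whenever one side converges; the equality of $\liminf$ and $\limsup$ propagates through.

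Finally, for the identity $F_\Omega^\delta(0)=|h'(0)|^{2-\beta}F_{B_1}^\delta(0)$: the inequality $\le$ follows since any concentrating sequence $\{u_i\}$ at $0$ in $\Omega$ produces, via $v_i=u_i\circ h$, a concentrating sequence at $0$ in $B_1$ with $\limsup_i F_\Omega(u_i) = |h'(0)|^{2-\beta}\limsup_i F_{B_1}(v_i) \le |h'(0)|^{2-\beta}F_{B_1}^\delta(0)$ (passing to a subsequence along which $F_\Omega(u_i)$ converges to its $\limsup$). For $\ge$, one reverses the construction: given a concentrating sequence $\{v_i\}$ at $0$ in $B_1$, set $u_i = v_i\circ h^{-1}\in\mathcal{B}_1(\Omega)$; the same conformal-invariance argument (now using continuity of $h$ at $0$) shows $\{u_i\}$ concentrates at $0$ in $\Omega$, and the limit formula gives $\limsup_i F_\Omega(u_i) = |h'(0)|^{2-\beta}\limsup_i F_{B_1}(v_i)$, so $F_\Omega^\delta(0)\ge |h'(0)|^{2-\beta}F_{B_1}^\delta(0)$. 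The main obstacle is the limit-formula step: one must carefully justify that the portion of the integral away from the origin is negligible (this is where Lemma \ref{lemma:compactness in the interior} and the weak convergence $v_i\rightharpoonup 0$ enter, together with a cutoff to localize the Dirichlet bound below $1$), and that near the origin the replacement of the weight $|h'(y)|^2|g(y)|^{-\beta}$ by its value $|h'(0)|^{2-\beta}$ at $0$ costs only a factor $1+o(1)$ uniformly in $i$ — which needs the a priori boundedness of $\int_{B_\rho}(e^{\alpha v_i^2}-1)|y|^{-\beta}$ from the singular Moser–Trudinger inequality \eqref{intro:eq:Adi Sandeep result}.
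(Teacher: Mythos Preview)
Your proposal is correct and follows essentially the same route as the paper: the change of variables $x=h(y)$, the continuous weight $w(y)=|h'(y)|^2/|g(y)|^\beta$ (the paper calls it $\chi$) with $w(0)=|h'(0)|^{2-\beta}$, the near/far splitting, the cutoff-plus-Lemma~\ref{lemma:compactness in the interior} argument on the annulus, and the use of the uniform bound $F_{B_1}^{\sup}$ to control the inner piece are all exactly what the paper does in its Steps~1--5. One small slip to fix: in your concentration argument the inclusion is reversed---from $h(B_\delta)\subset B_\epsilon$ you get $h^{-1}(\Omega\setminus B_\epsilon)\subset B_1\setminus B_\delta$, not the inclusion you wrote, and this does \emph{not} justify $\int_{\Omega\setminus h(B_\delta)}|\nabla u_i|^2\le\int_{\Omega\setminus B_\epsilon}|\nabla u_i|^2$; the correct version fixes $\delta$ first and uses continuity of $h^{-1}$ to find $\epsilon$ with $B_\epsilon\cap\Omega\subset h(B_\delta)$, which then gives the needed containment $\Omega\setminus h(B_\delta)\subset\Omega\setminus B_\epsilon$.
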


\begin{proof} 
\textit{Step 1.}
As in the proof of Theorem \ref{theorem:ball to general domain:sup inequality}, we can show by a change of variables, that indeed $v_i\in \mathcal{B}_1(B_1),$ and thus $F_{B_1}(v_i)$ is well defined.
To calculate $\lim_{i\to\infty}F_{B_1}(v_i)$ we  use again the same change of variables $x=h(y),$  and obtain that
$$
  \lim_{i\to\infty} F_{\Omega}(u_i)=\lim_{i\to\infty}\int_{h(B_1)} \frac{e^{\alpha u_i^2}-1}{|x|^{\beta}}=\lim_{i\to\infty}\int_{B_1}\frac{e^{\alpha v_i^2}-1}{|h(y)|^{\beta}}|h'(y)|^2.
$$
Let $\delta>0$ be arbitrary and let us split the integral in two parts
\begin{align*}
  \lim_{i\to\infty} F_{\Omega}(u_i)=&
  \lim_{i\to\infty}\int_{B_{\delta}}\frac{e^{\alpha v_i^2}-1}{|h(y)|^{\beta}}|h'(y)|^2
  +
  \lim_{i\to\infty}\int_{B_1\backslash B_{\delta}}\frac{e^{\alpha v_i^2}-1}{|h(y)|^{\beta}}|h'(y)|^2
  \smallskip \\
  =&\lim_{i\to\infty}A_1^i(\delta)+\lim_{i\to\infty}A_2^i(\delta).
\end{align*}
\smallskip 

\textit{Step 2.} In this step we show that
\begin{equation}\label{eq:proof:limit of A two i delta limit}
 \lim_{i\to\infty}A_2^i(\delta)=
  \lim_{i\to\infty}\int_{B_1\backslash B_{\delta}}\frac{e^{\alpha v_i(y)^2}-1}{|h(y)|^{\beta}}|h'(y)|^2dy=0\quad\text{ for all }\delta>0.
\end{equation}
Since $h(z)\neq 0$ for all $z\in B_1\backslash\{0\},$ we obtain that 
$$
  \frac{|h'(y)|^2}{|h(y)|^{\beta}}\in L^{\infty}\left(B_1\backslash B_{\delta}\right).
$$
Thereby we have also used that $|h'|^2$ is bounded up to the boundary $\delomega.$ This follows from the fact that $|h'|^2=\det Dh$ and $h\in C^1(\overline{B_1}),$ because $\Omega$ is bounded and has smooth boundary (cf. for instance Theorem 5.2.4 page 121 in Krantz \cite{Krantz Complex Analysis})
Therefore it is enough to prove that
$$
  \lim_{i\to\infty}\int_{B_1\backslash B_{\delta}}\left(e^{\alpha v_i^2}-1\right)=0\quad\text{ for all }\delta>0.
$$
Choose $\eta\in C^{\infty}\left(\overline{B_1}\right)$ such that $\eta\geq 0$ and
$$
  \eta=1\quad\text{ in }B_1\backslash B_{\delta},\qquad\eta=0\quad\text{ in }B_{\delta/2}.
$$
Then we obtain that
\begin{equation}\label{eq:proof:eta vi and limit in Bdeltao}
  \lim_{i\to\infty}\int_{B_1\backslash  B_{\delta}}\left(e^{\alpha v_i^2}-1\right)
  \leq
  \limsup_{i\to\infty}\int_{B_1\backslash B_{\delta/2}}\left(e^{\alpha (\eta v_i)^2}-1\right).
\end{equation}
Note that $\eta v_i\in W^{1,2}_0\left(B_1\backslash\overline{B}_{\delta/2}\right)$ and the gradient can be estimated as
\begin{align*}
  \int_{B_1\backslash B_{\delta/2}}|\nabla(\eta v_i)|^2\leq &
  2\int_{B_1\backslash B_{\delta/2}}|v_i\,\nabla\eta |^2
  +
  2\int_{B_1\backslash B_{\delta/2}}\eta^2|\nabla v_i|^2
  \smallskip \\
  \leq &
  C({\eta},\delta)\int_{B_1}|v_i|^2+2\int_{B_1\backslash B_{\delta/2}}|\nabla v_i|^2,
\end{align*}
for some constant $C(\eta,\delta)\in\re.$
It can be easily verified (similarly as in Step 1 in the proof of Theorem \ref{theorem:ball to general domain:sup inequality}) that $v_i$ concentrates at $0,$ since $h(0)=0.$
Therefore both terms on the right hand side tend to $0$ for $i\to\infty,$ see \eqref{eq:properties of concentrating sequences}. In particular we get that, for some $i_0\in\mathbb{N},$
$$
  \int_{B_1\backslash B_{\delta/2}}|\nabla(\eta v_i)|^2\leq \frac{1}{2} \quad\text{ for all }i\geq i_0\,.
$$
We can therefore apply Lemma \ref{lemma:compactness in the interior} (see Remark \ref{remark:independence of topology}) for the sequence $\eta v_i$ and the domain $B_1\backslash B_{\delta/2}.$ This gives, using \eqref{eq:proof:eta vi and limit in Bdeltao} and that $v_i\rightharpoonup 0$ in $W^{1,2}(B_1),$ that
$$
  \lim_{i\to\infty}\int_{B_1\backslash  B_{\delta}}\left(e^{\alpha v_i^2}-1\right)
  =0.
$$
which concludes the proof of \eqref{eq:proof:limit of A two i delta limit}.
\smallskip

\textit{Step 3.} Since $v_i$ concentrates at $0,$ we can show exactly as in Step 2, that
$$
  \lim_{i\to\infty}\int_{B_1\backslash B_{\delta}(0)}\frac{e^{\alpha v_i^2}-1}{|y|^{\beta}}=0.
$$
In particular
\begin{equation}\label{eq:proof:proposition:F B_1 of vi and delta}
  \lim_{i\to\infty}\int_{B_{\delta}(0)}\frac{e^{\alpha v_i^2}-1}{|y|^{\beta}}
  =
  \lim_{i\to\infty}\int_{B_1}\frac{e^{\alpha v_i^2}-1}{|y|^{\beta}}=
  \lim_{i\to\infty}F_{B_1}(v_i).
\end{equation}

\textit{Step 4.} Let $g\in H(B_1)$ be as in the proof of Lemma \ref{lemma:some complex analysis}. In particular $g(z)\neq 0$ for all $z\in B_1$ and
$$
  \chi(y)=\frac{|y|^{\beta}|h'(y)|^2}{|h(y)|^{\beta}}=\frac{|h'(y)|^2}{|g(y)|^{\beta}}
$$
defines a coninuous function on $B_1.$ Therefore, if $\epsilon>0$ is given, we can chose $\delta>0$ such that
$$
  |\chi(y)-\chi(0)|\leq \epsilon\quad\text{ for all }y\in B_{\delta}(0).
$$
Since $g(0)=h'(0)$  (see proof of Lemma \ref{lemma:some complex analysis}), we get
\begin{equation}\label{eq:proof:lemma chi at zero}
  \chi(0)=|h'(0)|^{2-\beta}.
\end{equation}
Finally, note that by definition of $\chi$
\begin{equation}\label{eq:proof:Ai d delta limit written with chi}
  \lim_{i\to\infty}A^i_1(\delta)=\lim_{i\to\infty}\int_{B_{\delta}}\frac{e^{\alpha v_i^2}-1}{|y|^{\beta}} \chi(y) dy.
\end{equation}

\textit{Step 5 (conclusion).} Let $\epsilon>0$ be given and choose $\delta$ as in Step 4. Then from Step 1, equations \eqref{eq:proof:limit of A two i delta limit} and \eqref{eq:proof:lemma chi at zero} we get that
\begin{align*}
 \left|\lim_{i\to\infty}F_{\Omega}(u_i)-|h'(0)|^{2-\beta}\lim_{i\to\infty}F_{B_1}(v_i)\right|=\left|\lim_{i\to\infty}A_1^i(\delta)-\chi(0)\lim_{i\to\infty}F_{B_1}(v_i)\right|.
\end{align*}
Finally we obtain from \eqref{eq:proof:proposition:F B_1 of vi and delta}, \eqref{eq:proof:Ai d delta limit written with chi} and from the choice of $\delta$ in Step 4, that
\begin{align*}
  \left|\lim_{i\to\infty}F_{\Omega}(u_i)-|h'(0)|^{2-\beta}\lim_{i\to\infty}F_{{B_1}}(v_i)\right|
  =&
  \left|\lim_{i\to\infty}\int_{B_{\delta}}\frac{e^{\alpha v_i^2}-1}{|y|^{\beta}}\left(\chi(y)-\chi(0)\right)\right|
  \smallskip \\
  \leq&
  \epsilon\, F^{sup}_{B_1}\,,
\end{align*}
where $F^{\sup}_{B_1}<\infty$ is the constant given by the singular Moser-Trudinger embedding, see \eqref{intro:eq:Adi Sandeep result}.
Since $\epsilon$ was arbitrary, this proves the theorem.
\end{proof}
\smallskip

We are now able to prove the main theorem.
\smallskip

\begin{proof}[Proof of Theorem \ref{theorem:intro:Extremal for Singular Moser-Trudinger}.]
From Theorems \ref{theorem:concentration formula by domain to ball}, \ref{theorem:supremum of FOmega on Ball} and \ref{theorem:ball to general domain:sup inequality} we know that
$$
  F_{\Omega}^{\delta}(0)=|h'(0)|^{2-\beta}F_{B_1}^{\delta}(0)< |h'(0)|^{2-\beta}F_{B_1}^{\sup}\leq F_{\Omega}^{\sup}.
$$
Thus we obtain, using also Lemma \ref{proposition:if u_i concentrates somewhere else than zero}, that $F_{\Omega}^{\delta}(x)<F_{\Omega}^{\sup}$ for all $x\in\Omegabar,$ if $\beta>0.$ If $\beta=0,$ the same holds true by the result of Flucher \cite{Flucher} (the proof is the same: one can do all the steps with a different $h:B_1\to\Omega,$ satisfying $h(x)=0.$ This leads to $F^{\delta}_{\Omega}(x)=|h'(x)|^2F^{\delta}_{B_1}(0)<F_{\Omega}^{sup}$). This implies that maximizing sequences cannot concentrate and the result follows from Theorem \ref{theorem:concentration alternative for singular moser trudinger}.
\end{proof}

\bigskip

\noindent\textbf{Acknowledgements} The research work of the second author is supported by "Innovation in Science Pursuit for Inspired Research (INSPIRE)" under the IVR Number: 20140000099.

\end{document}